\documentclass[12pt]{amsart}
\usepackage{amsmath,amssymb,amsbsy,amsfonts,latexsym,amsopn,amstext,
                                               amsxtra,euscript,amscd}
\usepackage{url}
\usepackage[colorlinks,linkcolor=blue,anchorcolor=blue,citecolor=blue]{hyperref}
\usepackage{color}
\usepackage{graphics,epsfig}
\usepackage{graphicx}
\usepackage{float}

\usepackage[english]{babel}
\begin{document}

\newtheorem{theorem}{Theorem}
\newtheorem{lemma}[theorem]{Lemma}
\newtheorem{claim}[theorem]{Claim}
\newtheorem{cor}[theorem]{Corollary}
\newtheorem{prop}[theorem]{Proposition}
\newtheorem{definition}{Definition}
\newtheorem{quest}[theorem]{Open Question}

\numberwithin{equation}{section}
\numberwithin{theorem}{section}

 \newcommand{\F}{\mathbb{F}}
\newcommand{\K}{\mathbb{K}}
\newcommand{\D}[1]{D\(#1\)}
\def\scr{\scriptstyle}
\def\\{\cr}
\def\({\left(}
\def\){\right)}
\def\[{\left[}
\def\]{\right]}
\def\<{\langle}
\def\>{\rangle}
\def\fl#1{\left\lfloor#1\right\rfloor}
\def\rf#1{\left\lceil#1\right\rceil}
\def\le{\leqslant}
\def\ge{\geqslant}
\def\eps{\varepsilon}
\def\mand{\qquad\mbox{and}\qquad}
\def\ep{\mathbf{e}_p}
\def\e{\mathbf{e}}
\def\vec#1{\mathbf{#1}}

\newcommand{\commD}[1]{\marginpar{%
\begin{color}{red}
\vskip-\baselineskip 
\raggedright\footnotesize
\itshape\hrule \smallskip D: #1\par\smallskip\hrule\end{color}}}

\newcommand{\commI}[1]{\marginpar{%
\begin{color}{blue}
\vskip-\baselineskip 
\raggedright\footnotesize
\itshape\hrule \smallskip I: #1\par\smallskip\hrule\end{color}}}

\newcommand{\Fq}{\mathbb{F}_q}
\newcommand{\Q}{\mathbb{Q}}
\newcommand{\C}{\mathbb{C}}
\newcommand{\cS}{\mathcal{S}}
\newcommand{\Fp}{\mathbb{F}_p}
\newcommand{\Z}{\mathbb{Z}}
\newcommand{\PP}{\mathbb{P}}
\newcommand{\Disc}[1]{\mathrm{Disc}\(#1\)}
\newcommand{\Res}[1]{\mathrm{Res}\(#1\)}

\def\cA{{\mathcal A}}
\def\cB{{\mathcal B}}
\def\cC{{\mathcal C}}
\def\cD{{\mathcal D}}
\def\cE{{\mathcal E}}
\def\cF{{\mathcal F}}
\def\cG{{\mathcal G}}
\def\cH{{\mathcal H}}
\def\cI{{\mathcal I}}
\def\cJ{{\mathcal J}}
\def\cK{{\mathcal K}}
\def\cL{{\mathcal L}}
\def\cM{{\mathcal M}}
\def\cN{{\mathcal N}}
\def\cO{{\mathcal O}}
\def\cP{{\mathcal P}}
\def\cQ{{\mathcal Q}}
\def\cR{{\mathcal R}}
\def\cS{{\mathcal S}}
\def\cT{{\mathcal T}}
\def\cU{{\mathcal U}}
\def\cV{{\mathcal V}}
\def\cW{{\mathcal W}}
\def\cX{{\mathcal X}}
\def\cY{{\mathcal Y}}
\def\cZ{{\mathcal Z}}

\def\Inq{\cI_{n,q}}
\def\Inp{\cI_{n,p}}
\def\Pnq{\cP_{n,q}}
\def\Pnp{\cP_{n,p}}
\def\Qnq{\cP_{n,q}^*}
\def\Qnp{\cP_{n,p}^*}
\def\Snq{S_{n,q}(\psi)}
\def\Tnq{S_{n,q}^*(\psi)}
\def\Tnpa{T_{n,p}^*(a)}
\def\Tnpv{T_{n,p}^*(v)}
\def\Vnq{V_{n,q}(\psi)}

\newcommand{\Nm}[1]{\mathrm{Norm}_{\,\F_{q^k}/\Fq}(#1)}

\def\Tr{\mbox{Tr}}
\newcommand{\rad}[1]{\mathrm{rad}(#1)}

\title[Chains and Reflections
of $g$-ary Expansions]{Arithmetic Properties of Integers in Chains and Reflections
of $g$-ary Expansions}

%
\author[D. G\'omez-P\'erez] {Domingo G\'omez-P\'erez}
\address{Department of  Mathematics, Statistics and Computation,
Universidad de Cantabria,
Santander, 39005 Cantabria, Spain}
\email{domingo.gomez@unican.es}

\author[I. E. Shparlinski]{Igor E. Shparlinski}
\address{Department of Pure Mathematics, University of New South Wales,
Sydney, NSW 2052, Australia}
\email{igor.shparlinski@unsw.edu.au}

\begin{abstract}
Recently, there
has been a sharp rise of interest in properties of digits primes.
Here we study yet another question of this kind.
Namely,  we fix an  integer base $g \ge 2$ and then for every infinite  sequence
$$\cD = \{d_i\}_{i=0}^\infty \in \{0, \ldots, g-1\}^\infty
$$
of $g$-ary digits
we consider the counting function $\varpi_{\cD,g}(N)$ of integers $n \le N$
for which $\sum_{i=0}^{n-1} d_i g^i$ is prime. We construct sequences $\cD$
for which  $\varpi_{\cD,g}(N)$ grows fast enough, and show that for some
constant $\vartheta_g< g$ there are at most $O(\vartheta_g^N)$   initial elements
 $(d_0, \ldots, d_{N-1})$ of $\cD$ for which  $\varpi_{\cD,g}(N)=N+O(1)$.
We also discuss  joint  arithmetic properties of integers and mirror
reflections of their $g$-ary expansions. 
\end{abstract}

\keywords{primes, digits}
\subjclass{11A41, 11A63, 11N05}
\maketitle

\section{Introduction}


We fix an integer base $g \ge 2$ and then
for every infinite  sequence
$$\cD = \{d_i\}_{i=0}^\infty \in \{0, \ldots, g-1\}^\infty
$$
of $g$-ary digits.

We say that $\cD$ is of {\it length $N$\/} if $d_{N-1}$ is
the last non-zero element of $\cD$ if such $N$ exists;
otherwise we say that $\cD$ is of {\it infinite  length\/}.

We form the sequence  of integers
\begin{equation}
\label{eq:chain}
u_{\cD,g}(n) = \sum_{i=0}^{n-1} d_i g^i
\end{equation}
and define the counting function $\varpi_{\cD,g}(N)$ of integers
$n \le N$, for which
$u_{\cD,g}(n)$ is prime.
The question of the distribution of prime values in the sequences~\eqref{eq:chain}
has been introduced by Angell and  Godwin~\cite{AnGo},
see also~\cite{vdP}.
More precisely, both papers~\cite{AnGo,vdP} study sequences $\cD$ such that
the elements of~\eqref{eq:chain} are all primes.
Analogues of this question for polynomials
over finite fields have been considered by Chou and Cohen~\cite{ChCo}
and more recently by G{\'o}mez-P{\'e}rez,  Ostafe and Sha~\cite{GOS},
which  have in fact motivated this work. The rest of our motivation comes 
from a series of recent striking results about primes with restricted digits~\cite{Bour,DMR,MauRiv,May}. 

It is easy to see that for almost all sequences  $\cD$
 (in the sense of the Lebesgue measure
in one interpret $\cD$ as  a $g$-ary expansion of a real number in $[0,1]$)
we have $u_{\cD,g}(n)  = g^{n + o(n)}$ (and in fact $g^{n-1} \le u_{\cD,g}(n)  < g^n$
if $\cD$ does not contain zero digits). Hence, the standard heuristic suggests that
 $\varpi_{\cD,g}(N)$  has to grow as
 $$
 \sum_{n=1}^{N} \frac{1}{\log u_{\cD,g}(n)} =  \sum_{n=1}^{N} \frac{1}{n \log g+ o(n)}
 \sim  \frac{\log N}{ \log g}
 $$
(clearly there are also some local conditions which we have ignored as we are only
interested in the rate of growth).

On the other hand, one can clearly guarantee that $\varpi_{\cD,g}(N) \ge  N-1$
by simply taking  $d_0=0$, $d_1 =1$ if $g = 2$ and
$d_0=2$, if $g \ge 3$, setting all other elements to zero.
However, we are interested in prime values of $u_{\cD,g}(n)$
for nontrivial sequences $\cD$ of large or infinite length.

 For instance,  in Section~\ref{sec:const} we construct a sequence $\cD \in \{0, \ldots, g-1\}^\infty$
with infinitely  many non-zero digits for which
\begin{equation}
\label{eq:LowBound}
\varpi_{\cD,g}(N) \gg  \log N
\end{equation}
where, as usual, the expressions $A \ll B$,  $B \gg A$ and $A=O(B)$ are each equivalent to the
statement that $|A|\le cB$ for some positive constant $c$.
Throughout the paper the implied constants may   depend on $g$.

Let  $P_g(N)$  be the number of sequences  $\cD$
of length $N$ (that is, with $d_{N-1} \ne 0$)
such that $\varpi_{\cD,g}(N) =  N-  \eta$,
where
$$
\eta = \left\{
\begin{array}{ll} 1, & \text{if}\ g = 2,\\
0, & \text{if}\ g \ge 3.
\end{array}
\right.
$$
In particular, from the prime number theorem we immediately
obtain the following trivial bound
$$
P_g(N) \ll \frac{g^N}{N},
$$
which we use as a benchmark for our improvements in
 Section~\ref{sec:bound}.

We also use this opportunity to introduce another question about digits of primes. 
Namely, given a  $g$-ary expansion
$$
s = \sum_{i=0}^{n-1}  d_i g^{i}, \qquad d_i \in \{0,\ldots, g-1\}, \ i =1, \ldots, n,
$$
we denote by $s_g^*$ the ``mirror'' reflection of $s$, that is,
$$
s_g^* = \sum_{i=0}^n d_{n-1-i} g^{i}.
$$
We denote by $M_g(N)$ the number of primes $p \in [g^{N-1}, g^N-1]$,
for which $p_g^*$ is also prime. For example, if $p$ is a Fermat prime, then
$p_{2}^*$ is also a prime.
Although we have not been able to obtain any nontrivial bounds on $M_g(N)$,
in Section~\ref{sec:mirror} we give some
other results about the simultaneous arithmetic structure of $p$ and $p_g^*$.
In passing, we note that corresponding question for polynomials is trivial
as the ``mirror'' polynomial $X^Nf(1/X)$ of a polynomial $f(X)$
of degree $N$ has the same arithmetic structure as $f$.

\section{Constructing Sequences With Many Primes}
\label{sec:const}

Using the bound of  Chang~\cite[Corollary~11]{Chang} on the smallest 
prime in an arithemtic progression modulo an integer composed out
of small primes (see also~\cite{Iwan}), 
we obtain the following more precise form of~\eqref{eq:LowBound}.

\begin{theorem}
\label{thm:InfSeq}
There is a sequence
$$\cD = \{d_i\}_{i=0}^\infty \in \{0, \ldots, g-1\}^\infty
$$
that has infinitely many non-zero elements,  for which
$$
\varpi_{\cD,g}(N)  \ge \(\frac{1}{\log (12/5)} + o(1)\)\log N
$$
as $N \to \infty$.
\end{theorem}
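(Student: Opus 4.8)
The plan is to build $\cD$ greedily, producing a strictly increasing sequence of primes $p_1 < p_2 < \cdots$ in which each $p_k$ is an initial segment, in the $g$-ary sense, of the next, so that $u_{\cD,g}(m_k) = p_k$ where $m_k$ denotes the number of $g$-ary digits of $p_k$. The key observation is that extending a partial expansion $u_{\cD,g}(m)$ by appending higher-order digits $d_m, d_{m+1}, \ldots$ leaves the residue class modulo $g^m$ unchanged: for any $n > m$ one has $u_{\cD,g}(n) \equiv u_{\cD,g}(m) \pmod{g^m}$. Hence, starting from a prime $p_k = u_{\cD,g}(m_k)$, finding a \emph{longer} prime that preserves the already-chosen digits $d_0, \ldots, d_{m_k-1}$ is precisely the problem of finding a prime in the arithmetic progression $p \equiv p_k \pmod{g^{m_k}}$.

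Now I would invoke Chang's bound. The modulus $q = g^{m_k}$ is composed solely of the prime divisors of $g$, hence is built from bounded primes — exactly the regime of~\cite[Corollary~11]{Chang}. Starting from a prime $p_1 > g$ ensures $\gcd(p_k, g^{m_k}) = 1$ at every stage, so the progression $p \equiv p_k \pmod{g^{m_k}}$ is admissible. Chang's bound then yields a prime $p_{k+1}$ in this progression with $p_{k+1} \le (g^{m_k})^{12/5 + o(1)}$, the $o(1)$ tending to $0$ as $m_k \to \infty$; moreover $p_k$ is the \emph{only} element of the residue class in $[1, g^{m_k})$, so any prime of the class exceeding $g^{m_k}$ — which the bound supplies below $(g^{m_k})^{12/5+o(1)}$ — automatically satisfies $p_{k+1} > p_k$ while sharing the low $m_k$ digits of $p_k$. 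Writing $m_{k+1}$ for the digit-length of $p_{k+1}$, we obtain $m_{k+1} \le (12/5 + o(1)) m_k$, and we set $d_{m_k}, \ldots, d_{m_{k+1}-1}$ to be the top digits of $p_{k+1}$. Taking the union over all $k$ defines $\cD$ with $u_{\cD,g}(m_k) = p_k$ prime for every $k$; since $p_{k+1} > p_k$ forces $m_k \to \infty$, the sequence $\cD$ has infinitely many non-zero digits.

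It remains to count. From $m_{k+1} \le (12/5 + o(1)) m_k$ one gets, after telescoping and a Cesàro-type averaging of the logarithms $\log(12/5 + o(1))$, that $\log m_k \le (\log(12/5) + o(1))\,k$ as $k \to \infty$. Consequently, for a given $N$ the number of indices $k$ with $m_k \le N$ — each contributing a prime value $u_{\cD,g}(m_k)$ at a point $m_k \le N$ — is at least $(1 + o(1)) \log N / \log(12/5)$, which is the claimed lower bound on $\varpi_{\cD,g}(N)$.

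The main technical point is the honest bookkeeping of the two sources of $o(1)$: the error in Chang's exponent, which decays only as $m_k \to \infty$, and its accumulation through the $k$-fold iteration. Making the averaging rigorous, so that the per-step losses do not degrade the constant $1/\log(12/5)$, is the crux; by comparison, verifying the smoothness and coprimality hypotheses of Chang's theorem at each stage, extracting a prime in the class \emph{above} $g^{m_k}$, and checking that the digit-appending interpretation is consistent are all routine.
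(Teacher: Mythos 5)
Your construction, the appeal to Chang's least-prime bound \cite[Corollary~11]{Chang} for the smooth moduli $g^{m_k}$, the resulting growth $m_{k+1} \le (12/5+o(1))\,m_k$, and the logarithmic count of prime indices are exactly the paper's argument, so in substance your proof matches the paper's.

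One step, however, is misstated: you claim that Chang's bound ``supplies'' a prime of the class $p_k \pmod {g^{m_k}}$ that \emph{exceeds} $g^{m_k}$. A least-prime theorem does not give this here: since $p_k < g^{m_k}$ is itself prime, it is the least prime in its own residue class, so the bound is trivially satisfied by $p_k$ and guarantees no second prime below $(g^{m_k})^{12/5+o(1)}$. As written, your induction could in principle stall at $p_{k+1}=p_k$ forever, producing a sequence with only finitely many non-zero digits (the paper's proof has the same latent issue: its inequality $\varpi_{\cD,g}(n+m)\ge \varpi_{\cD,g}(n)+1$ survives a repeated prime, but the claim of infinitely many non-zero elements then does not). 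The repair is cheap: apply Chang's bound modulo $g^{m_k+1}$ to the residue $p_k+g^{m_k}$, which is still coprime to $g$; this forces the digit $d_{m_k}=1$, hence yields a prime strictly larger than $p_k$ sharing the first $m_k$ digits, and changes the exponent only by the factor $1+1/m_k$, which is absorbed in the $o(1)$. With this modification your argument, and for that matter the paper's, is complete.
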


\begin{proof} We choose $d_0$ and $d_1$ in such a way that either
$u_{\cD,g}(1)$ or $u_{\cD,g}(2)$
is prime. Now, assume that $d_0, \ldots, d_{n-1}$ have already been chosen
for the initial segment of
$\cD$.  Using a result of Chang~\cite[Corollary~11]{Chang} (see also~\cite{Iwan})
we see that for any $\varepsilon > 0$ there exists a
constant $c$ such that for every $n =1,2, \ldots$
there exists a prime $p$ with
$$
p \equiv u_{\cD,g}(n) \pmod {g^n} \mand p \le c g^{n(12/5+\varepsilon)}
$$
(note that $\gcd(u_{\cD,g}(n), g)=1$), see also for much larger class of moduli than $g^n$.
We now define the next
$$
m=  (7/5 + \varepsilon)n + O(1)
$$
elements of $\cD$ as the
$g$-ary digits of $\(p-u_{\cD,g}(n)\)/g^n$. This implies the inductive inequality
$\varpi_{\cD,g}(n+m) \ge \varpi_{\cD,g}(n) + 1$.
Thus for $N_k = \rf{(12/5 +  2\varepsilon)^k}$, $k =1,2, \ldots$,
we obtain $\varpi_{\cD,g}(N_k) \ge k+ O(1)$.
 Since $\varepsilon$ is arbitrary,
the result now follows.
\end{proof}

\section{Bounding the number of sequences with  all  primes}
 \label{sec:bound}

We now use a version of the Brun--Titchmarsh inequality,
due to  Montgomery and Vaughan~\cite[Theorem~2]{MoVa}   to improve the trivial
upper bound~\eqref{eq:LowBound}.

For $g \ge 3$ we define
\begin{equation}
\label{eq:gamma}
\gamma_g =  g \min_{m =1,2, \ldots}  \(\frac{2g}{m\varphi(g) \log g}\)^{1/m},
\end{equation}
where $\varphi(q)$ is the Euler function of the integer $q \ge 1$.

Clearly $\gamma_g < g$ for any $g$ and also when $g$ is large enough then
$m=1$ is the optimal value and thus
$$
\gamma_g =\frac{2g^2}{\varphi(g) \log g}.
$$
  The standard bound on the Euler function
(see~\cite[Theorem~328]{HardyWright}) guarantees
that
$$
\gamma_g =O\(\frac{g \log \log g}{\log g}\)
$$
as $g \to \infty$.

\begin{theorem}
\label{thm:PrimeSeq  Any g}
For a sufficiently large $N$, we have
$$
P_g(N)  \ll \gamma_g^N, 
$$
where the implied constant is absolute.
\end{theorem}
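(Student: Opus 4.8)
The plan is to organise the admissible sequences $\cD$ into a rooted tree in which a node at depth $km$ records a possible prime value of $u_{\cD,g}(km)$, and to bound the number of sequences by the product of the branching numbers along the tree. Since for an upper bound we may discard the requirement that the intermediate values be prime and keep only that the block endpoints $u_{\cD,g}(km)$ are prime, the task reduces to estimating, for each passage from $u_{\cD,g}(n)$ to $u_{\cD,g}(n+m)$, how many choices of the next block of $m$ digits keep the value prime. Because
$$
u_{\cD,g}(n+m) \equiv u_{\cD,g}(n) \pmod{g^n},
$$
these admissible values are primes in a single residue class modulo $g^n$ inside a short interval, and counting them by the Brun--Titchmarsh inequality and then optimising over $m$ is exactly what produces $\gamma_g$ in~\eqref{eq:gamma}.

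First I would record the branching estimate. Fix $m\ge 1$ and suppose $u_{\cD,g}(n)$ is prime with $\gcd(u_{\cD,g}(n),g)=1$. As $d_n,\dots,d_{n+m-1}$ range over $\{0,\dots,g-1\}$, the value
$$
u_{\cD,g}(n+m)=u_{\cD,g}(n)+\sum_{j=0}^{m-1}d_{n+j}g^{n+j}
$$
runs over an arithmetic progression modulo $q=g^n$ lying in an interval of length $y=g^{n+m}$. Since $y>q$ and $\gcd(u_{\cD,g}(n),q)=1$, the form of the Brun--Titchmarsh inequality of Montgomery and Vaughan~\cite[Theorem~2]{MoVa} bounds the number of primes in this progression and interval by
$$
\frac{2y}{\varphi(q)\log(y/q)}=\frac{2g^{n+m}}{\varphi(g^n)\,m\log g}=\frac{2g^{m+1}}{m\,\varphi(g)\log g},
$$
using $\varphi(g^n)=g^{n-1}\varphi(g)$ and $\log(y/q)=\log g^m=m\log g$. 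Thus the number of admissible blocks is at most $\(g\(2g/(m\varphi(g)\log g)\)^{1/m}\)^m$, uniformly in $n$ and in the history of the path; enlarging the interval from its true length $g^n(g^m-1)$ to $g^{n+m}$ is precisely what makes $\log(y/q)$ equal to $m\log g$ and matches the definition~\eqref{eq:gamma}.

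Next I would assemble the bound. I would choose $m$ to be a minimiser in~\eqref{eq:gamma}, so that the branching bound above equals $\gamma_g^m$, and note that the uniformity in the residue class makes this bound independent of which prime $u_{\cD,g}(n)$ has been reached. Before multiplying I would dispose of two points. First, $u_{\cD,g}(n)\equiv d_0\pmod g$, so $\gcd(u_{\cD,g}(n),g)=1$ at every step as soon as $\gcd(d_0,g)=1$; and if $\gcd(d_0,g)>1$ then every $u_{\cD,g}(n)$ is divisible by a fixed prime factor of $g$, hence can be prime only while it equals that factor, which is impossible for all $n\le N$ once $N$ is large, so we may assume $\gcd(d_0,g)=1$. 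Second, the root block fixes $d_0,\dots,d_{m-1}$ and forces $u_{\cD,g}(m)$ to be one of the at most $\pi(g^m)\le 2g^m/(m\log g)\le \gamma_g^m$ primes below $g^m$, which obeys the same bound via the prime number theorem. Multiplying the root bound by the branching bound over the $\lfloor N/m\rfloor-1$ subsequent full blocks, and absorbing the at most $g^m$ choices in the final incomplete block together with the constraint $d_{N-1}\ne 0$ into a bounded factor, yields $P_g(N)\le \gamma_g^{m\lfloor N/m\rfloor}\,g^m\ll\gamma_g^N$, as claimed.

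The step I expect to be the main obstacle is the short-interval Brun--Titchmarsh input and the choice of the interval length: the modulus $q=g^n$ grows with $n$, yet one needs a branching bound that is \emph{bounded uniformly in $n$}, so that the estimates over the roughly $N/m$ blocks multiply into a clean exponential in $N$; taking the interval of length exactly $g^{n+m}$ is what both cancels the $g^n$ and turns $\log(y/q)$ into $m\log g$, so that the per-block bound is literally $\gamma_g^m$. The uniformity of Brun--Titchmarsh in the residue class $a$ is essential, since the branching bound must not depend on the particular prime reached along the path. The remaining ingredients -- the coprimality bookkeeping, the prime number theorem bound for the root block, and the optimisation over $m$ -- are routine.
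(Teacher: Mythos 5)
Your proposal is correct and follows essentially the same route as the paper: your ``branching'' estimate is exactly the paper's inductive inequality
$P_g(n+m) \le P_g(n)\,\frac{2g^{m+1}}{m\varphi(g)\log g}$, obtained from the Montgomery--Vaughan form of Brun--Titchmarsh applied to the progression $u_{\cD,g}(n) \pmod {g^n}$ inside $[1, g^{n+m}]$, followed by iteration over blocks of length $m$ and optimisation of $m$ as in~\eqref{eq:gamma}. The tree formulation, the explicit coprimality discussion, and the placement of the leftover $r<m$ digits at the end rather than the beginning are only cosmetic differences from the paper's argument.
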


\begin{proof} We start with deriving an inductive inequality
between $P_g(n)$ and $P_g(n+m)$ for an appropriately chosen $m$.

We first observe that the first  $n$ digits of any $n+m$ digit  sequence
$(d_0, \ldots, d_{n+m-1})$ counted in  $P_g(n+m)$
must come from a sequence counted in  $P_g(n)$.
Now,  assume that $m > \eta$. Then, all
such  extensions of  a  $n$ digit sequence  
to a $n+m$ digit  sequence counted in  $P_g(n+m)$
generates a prime  $p \le g^{n+m}$
in a fixed arithmetic progression modulo $g^n$.
We now recall the upper bound from~\cite[Theorem~2]{MoVa}
\begin{equation}
\label{eq:BrunTitchmarsh}
\pi(x;q,a) \le \frac{2 x}{\varphi(q) \log (x/q)},
\end{equation}
on the
number of primes $p\le x$ in arithmetic progressions $p \equiv a \pmod q$,
(see also~\cite[Theorem~6.6]{IwKow}
for a slightly weaker result, which is still sufficient for our purposes).
Therefore, we obtain
$$
P_g(n+m) \le P_g(n)  \frac{2 g^{n+m}}{\varphi(g^n) \log g^m}
= P_g(n) \frac{2g^{m+1}}{m\varphi(g) \log g}.
$$
We now conclude that for any fixed integer $m\ge 1$, denoting by $r$ the remainder of $N$ on
division by $m$, and using the trivial bound $P_g(r) \le g^r$, we have
\begin{align*}
P_g(N)& \le  g^r \(\frac{2g^{m+1}}{m\varphi(g) \log g}\)^{\fl{N/m}}
=g^N \(\frac{2g}{m\varphi(g) \log g}\)^{\fl{N/m}} \\
& \ll g^N \(\frac{2g}{m\varphi(g) \log g}\)^{N/m}.
\end{align*}
with  an absolute implied constant.
Simple calculus shows that
$$
\lim_{z\to \infty}  \(\frac{2g}{z \varphi(g) \log g}\)^{1/z} = 1.
$$
Hence  there is  integer  $m_0$,  depending only on $g$ , on which the minimum in~\eqref{eq:gamma}
is achieved, and the result now follows.
\end{proof}

We note that for the values of $q$ in the medium range, for example, for $x^\vartheta \le q \le 2x^{\vartheta}$
for some fixed real $\vartheta \in (0,1)$, there are various improvements of~\eqref{eq:BrunTitchmarsh},
see~\cite{BouGar,FrIw} and references therein. However these results do not seem to
be useful in our context.

On the other hand, for smaller values of $g$ one can obtain better values of $\gamma_g$ via an
application of  the sieve of Eratosthenes instead of a direct application
of~\cite[Theorem~2]{MoVa} (in fact implicitly this is a part of the argument of
the proof of~\cite[Theorem~2]{MoVa}, see~\cite[Lemma~3]{MoVa}).

For positive integers $q$ and $U$ we define the function
$$
\varphi(q,U) =
\max_{1\le h \le q}\,  \sum_{\substack{u=1 \\ \gcd(u+h, q)=1}}^U 1 .
$$
In particular,  $\varphi(q,q) = \varphi(q)$ is the classical Euler function.
We also note that  it can be defined in a more general but equivalent
form
$$
\varphi(q,U) =
\max_{\substack{1 \le a,b \le q\\ \gcd(a,q) =1}}\,  \sum_{\substack{u=1 \\ \gcd(au+b, q)=1}}^U 1.
$$
Using the M\"obius function $\mu(d)$ over the divisors of
$q$ to detect
the co-primality condition, see~\cite[Equation~(1.18)]{IwKow} and interchanging the order of summation, we
derive
$$
 \sum_{\substack{u=1 \\ \gcd(u+h, q)=1}}^U 1 = \sum_{d\mid q}\mu(d)
\sum_{\substack{u=1 \\  d \mid u+h}}^U 1 =
\sum_{d\mid q}\mu(d) \(\frac{U}{d} + \xi_d\),
$$
where $|\xi_d| \le (d-1)/d$ (since the condition $d \mid u+h$ puts $u$ in a
prescribed arithmetic progression modulo $d$).
Hence, using $s$ to denote the number of prime divisors of $q$ we obtain
\begin{equation}
\label{eq:Erat}
\begin{split}
 \sum_{\substack{u=1 \\ \gcd(u+h, q)=1}}^U 1  \le
\sum_{d\mid q}\mu(d) \frac{U-1}{d}  + \sum_{d\mid q}|\mu(d)|
=
  \frac{\varphi(q)}{q}(U-1) + 2^s
\end{split}
\end{equation}
by~\cite[Equation~(1.36)]{IwKow}.
However for our purposes below, we work with rather small values
of $q$ and $U$, so we can always compute $\varphi(q,U)$ explicitly.

We can now use the above argument to improve  the values of $\gamma_g$ of
Theorem~\ref{thm:PrimeSeq Any g}
for $g =2, 3, 5, 10$,
and show that
$$
\gamma_2= 1.876\ldots, \quad
\gamma_3 =  2.622\ldots, \quad
\gamma_5 =  3.947\ldots, \quad
\gamma_{10} =  8.441\ldots
$$
(corresponding to the $m= 16,7, 4, 6$, respectively, in~\eqref{eq:gamma}).
However, instead of using the bound~\eqref{eq:Erat} directly, we simply
evaluate $\varphi(q,U)$ for concrete values of $q$ and $U$ that optimize
our results.

\begin{theorem}
\label{thm:PrimeSeq Small g}  For  $g =2, 3, 5, 10$, we have,
$$
P_g(N)  \ll \vartheta_g^N,
$$
where
$$
\vartheta_2 = 5^{1/3} = 1.709\ldots,
 \qquad  \vartheta_3 =  2,
  \qquad  \vartheta_5 =  2,
   \qquad  \vartheta_{10} =  6.
 $$
\end{theorem}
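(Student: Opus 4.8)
The plan is to reproduce the inductive scheme from the proof of Theorem~\ref{thm:PrimeSeq  Any g}, but to feed it the elementary sieve quantity $\varphi(q,U)$ in place of the Brun--Titchmarsh input~\eqref{eq:BrunTitchmarsh}. As there, I fix an integer $m>\eta$ and relate $P_g(n+m)$ to $P_g(n)$: every $(n+m)$-digit string counted in $P_g(n+m)$ restricts to an $n$-digit prefix fixing a residue $a\equiv u_{\cD,g}(n)\pmod{g^n}$, and the remaining $m$ digits are encoded by an integer $v\in\{0,\ldots,g^m-1\}$ through $u_{\cD,g}(n+m)=a+g^n v$. Thus the number of admissible extensions of a fixed prefix is at most the number of $v$ in this range for which $a+g^nv$ is prime; for $g\ge 3$ (where $\eta=0$) this is exact, while the single permitted non-prime for $g=2$ (where $\eta=1$) forces $m>\eta$ and is dealt with exactly as in Theorem~\ref{thm:PrimeSeq  Any g}, the prefixes with $d_{n-1}=0$ being handled as there.

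The key new step replaces the requirement that $a+g^nv$ be prime by the weaker necessary condition that it be coprime to an auxiliary modulus $q$ with $\gcd(q,g)=1$, since every prime exceeding $q$ is coprime to $q$. Because $\gcd(g^n,q)=1$, the residue $g^n$ is a unit modulo $q$, so with the fixed shift $h\equiv (g^n)^{-1}a\pmod q$ the condition $\gcd(a+g^nv,q)=1$ is equivalent to $\gcd(v+h,q)=1$. Hence the number of admissible extensions of a fixed prefix is at most
$$
\#\{0\le v< g^m:\ \gcd(v+h,q)=1\}\le \varphi(q,g^m),
$$
directly from the definition of $\varphi(q,U)$. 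This yields the recursion $P_g(n+m)\le P_g(n)\,\varphi(q,g^m)$, and iterating over $\fl{N/m}$ blocks while bounding the residual block trivially gives
$$
P_g(N)\ll \varphi(q,g^m)^{N/m}=\(\varphi(q,g^m)^{1/m}\)^{N}
$$
with an absolute implied constant. It then remains to choose, for each $g$, a pair $(q,m)$ making $\varphi(q,g^m)^{1/m}$ equal to the target $\vartheta_g$.

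Finally I carry out the optimization for each base. Since the condition $\gcd(v+h,q)=1$ is exactly the one measured by the first definition of $\varphi(q,U)$, each value $\varphi(q,g^m)$ is a finite, explicitly computable maximum of the number of residues coprime to $q$ in a window of $g^m$ consecutive integers. Evaluating these windows gives
$$
\varphi(15,8)=5,\quad \varphi(2,3)=2,\quad \varphi(6,5)=2,\quad \varphi(21,10)=6,
$$
realized by the choices $(q,m)=(15,3),(2,1),(6,1),(21,1)$ for $g=2,3,5,10$ respectively; taking $m$-th roots produces $\vartheta_2=5^{1/3}$, $\vartheta_3=\vartheta_5=2$ and $\vartheta_{10}=6$. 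For instance, for $g=10$ one has $\gcd(21,10)=1$, and among any $10$ consecutive integers at least $3$ are divisible by $3$ and at least one by $7$, with no integer divisible by both since $10<21$; this leaves at most $6$ coprime to $21$, a bound attained by $\{1,\ldots,10\}$.

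The main obstacle is not the analytic input, which is now elementary, but this final optimization: one must locate the base-dependent pair $(q,m)$ minimizing $\varphi(q,g^m)^{1/m}$ and then certify the exact value of $\varphi(q,g^m)$. The delicate point is that $\varphi(q,U)$ is a maximum over all shifts $h$, so one must verify that no window of length $g^m$ beats the claimed extremum; here the identity $\gcd(au+b,q)=\gcd(u+a^{-1}b,q)$ for units $a$ modulo $q$ is what collapses every progression arising as $g^n$ varies into a consecutive window and keeps the verification a genuinely finite check. I expect the bookkeeping of the $g=2$ defect to be the only other place requiring care, and it is routine given the framework of Theorem~\ref{thm:PrimeSeq  Any g}.
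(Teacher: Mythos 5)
Your argument is structurally identical to the paper's proof, down to the parameter choices: the same relaxation of primality to coprimality with a fixed auxiliary modulus $q$ coprime to $g$, the same recursion $P_g(n+m)\le P_g(n)\,\varphi(q,g^m)$ obtained by reducing the arithmetic progression to a window of consecutive integers via the unit $g^n \bmod q$, and the identical pairs $(q,m)=(15,3),(2,1),(6,1),(21,1)$. The recursion step and the evaluations $\varphi(15,8)=5$, $\varphi(2,3)=2$, $\varphi(6,5)=2$ are correct, so the cases $g=2,3,5$ stand as written.

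The case $g=10$, however, contains a genuine error. You assert that among $10$ consecutive integers none is divisible by both $3$ and $7$ ``since $10<21$''; that inequality only guarantees at most \emph{one} multiple of $21$ in the window, not zero, and when the unique multiple of $7$ in the window is also a multiple of $3$, only three elements get excluded. Concretely, taking $h=15$ in the definition of $\varphi(q,U)$, the window $\{16,17,\ldots,25\}$ has exactly three elements not coprime to $21$ (namely $18$, $21$, $24$), so $\varphi(21,10)=7$, not $6$; moreover this window starts at $16$, which is coprime to $21$, so the extra fact that the prefix value $u_{\cD,g}(n)$ is prime does not rule it out. Your argument therefore proves only $P_{10}(N)\ll 7^N$. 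You should be aware that the paper's own proof has exactly the same defect: it also takes $q_s=21$, $m=1$ and claims to evaluate $\varphi(21,10)$ explicitly, but under the paper's definition that value is $7$, so the constant $\vartheta_{10}=6$ is not justified by the published argument either. The flaw is not a mere typo in the choice of $q$: for any squarefree $q$ coprime to $10$, the seven surviving offsets $\{0,1,3,4,6,7,9\}$ modulo $21$ persist, since each further prime $\ell\ge 11$ dividing $q$ removes only a proportion $7/\ell<1$ of starting residues, and the Chinese remainder theorem then produces a window of ten consecutive integers with seven elements coprime to $q$; so $\varphi(q,10)\ge 7$ always, and one-digit steps can never give $6$. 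Recovering an exponent below $7$ requires a multi-step refinement, e.g. tracking the residue $u_{\cD,g}(n)g^{-n}\bmod 21$ and observing that a window realizing $7$ has only one extension that again realizes $7$, which already yields the two-step bound $(7+6\cdot 6)^{N/2}=43^{N/2}$ with $43^{1/2}<7$ — but neither your proposal nor the paper carries out such an analysis.
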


\begin{proof} We present the argument in
a rather generic form suitable for further generalizations.
Let $s$ be an appropriately chosen integer and  let $q_s$ be a product 
of first $s$  primes that are relatively prime to $g$.

We proceed inductively as in the proof of Theorem~\ref{thm:PrimeSeq Any g}.
We assume that   $n$ is large enough so that we always have $u_{\cD,g}(n) > q_s$.
However, now,  instead of requesting that the extended sequence
$\cD_{n+m}= (d_0, \ldots, d_{n+m-1})$ corresponds to prime values of $u_{\cD,g}(n+m)$
we merely request that $\gcd(u_{\cD,g}(n+m), q_s)=1$.
Hence,
$$
P_g(n+m) \le P_g(n) \varphi(q_s, g^m).
$$
A simple inductive argument implies that for any fixed $m$ we have
$$
P_g(N)
\ll g^N \(g^{-m} \varphi(q_s, g^m)\)^{N/m}
=  \varphi(q_s, g^m)^{N/m}.
$$
Now, for $g=2$ we take $s = 2$ (so $q_s= 15$) and $m = 3$.
For $g=3$ we take  $s = 1$ (so $q_s=2$) and $m = 1$.
For $g=5$ we take  $s = 2$ (so $q_s=6$) and $m = 1$.
Finally, for $g=10$ we take  $s = 2$ (so $q_s=21$) and $m = 1$.
\end{proof}

We remark that  it is
quite possible that the elementary method of the proof of Theorem~\ref{thm:PrimeSeq Small g}
always improves on Theorem~\ref{thm:PrimeSeq Any g}, but it seems to be more difficult to
analyze.

One can also obtain similar results for sequences generating square free
integers. More precisely,  we can proceed exactly as in Theorem~\ref{thm:PrimeSeq Small g}  but instead
count integers in short intervals which fall in  residues classes $a \pmod {q_s^2}$, where $\gcd(a,q_s^2)$
is square free.

Since we allow zero digits, one expects that $P_g(N)$ is a growing
function of $N$ as for any prime ``ending'' $p$ one  expects to find $N$ such that
$g^N + p$ is prime again.

This expectation is based on the standard heuristic predicting
primes in increasing sequences of integers (without any local obstructions). Namely,
since the series
$$
\sum_{N=1}^{\infty} \frac{1}{\log (g^N + p)} = \infty
$$
is diverging, for any  $p$ with $\gcd(g,p)=1$ there are probably infinitely many positive
integers $N$ for which $g^N + p$ is prime (we need only one such $N$).
Our numerical tests suggest that in fact $P_g(N)$ grows exponentially, 
see Figure~\ref{fig:PnM}.

\section{Prime Mirrors in Arithmetic Progressions}
 \label{sec:mirror}

For positive integers $N$ and $m$ and an arbitrary integer $a$,
we denote by $R_g(N,m,a)$ the number of primes
$p \in [g^{N-1}, g^N-1]$
such that
$$p_g^* \equiv a \pmod m.
$$

For $R_g(N,m,a)$ we have the following two trivial bounds
\begin{equation}
\label{eq:R triv}
R_g(N,m,a) \ll \frac{g^N}{N} \mand R_g(N,m,a) \ll \frac{g^N}{m}+1.
\end{equation}

We now obtain a bound which improves~\eqref{eq:R triv} in the medium
range.
\begin{theorem} \label{thm: RNma}
For any   integer $m\ge 1$ we have
$$
R_g(N,m,a) \ll \frac{g^N\gcd(g^N,m)}{N m^{1/2}} .
$$
\end{theorem}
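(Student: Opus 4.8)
The plan is to separate the two halves of the $g$-ary expansion, letting one half carry the primality and the other the congruence on the reflection. Write $N=K+L$ and split each prime $p\in[g^{N-1},g^{N})$ as $p=u+g^{K}v$, where $u\in[0,g^{K})$ encodes the low $K$ digits and $v\in[g^{L-1},g^{L})$ the high $L$ digits. A direct computation with the definition of $p_g^*$ gives
$$
p_g^*=g^{L}u^*+v^*,
$$
where $u^*$ and $v^*$ are the reflections of $u$ and $v$ viewed as $K$- and $L$-digit strings, so that $v^*\in[0,g^{L})$. Hence the condition $p_g^*\equiv a\pmod m$ becomes the single linear congruence $g^{L}u^*+v^*\equiv a\pmod m$ coupling the two reflected halves. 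I would take $L$ as small as possible subject to $g^{L}\ge m$, so that $K=N-L$ is large (indeed $K\sim N$ in the medium range of interest), reserving the long low part $u$ for the prime count.

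Next I would fix the high digits $v$. For each such $v$ the integers $u+g^{K}v$, $u\in[0,g^{K})$, fill the short interval $I_v=[g^{K}v,g^{K}(v+1))$ of length $g^{K}$, and the congruence reduces to $u^*\equiv\beta_v\pmod{m_1}$ with $m_1=m/\gcd(g^{L},m)$ and $\beta_v$ determined by $v$. Writing $A_v$ for the number of primes in $I_v$ whose low part satisfies $u^*\equiv\beta_v\pmod{m_1}$, we have $R_g(N,m,a)=\sum_v A_v$, and Cauchy--Schwarz over the (at most $g^{L}$) admissible $v$ gives
$$
R_g(N,m,a)^2\le g^{L}\sum_v A_v^{2}.
$$
Here $\sum_v A_v^{2}$ counts triples $(v,u_1,u_2)$ with $u_1+g^{K}v$ and $u_2+g^{K}v$ both prime and $u_1^*,u_2^*\equiv\beta_v\pmod{m_1}$. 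The diagonal $u_1=u_2$ merely reproduces $R_g(N,m,a)$ and is absorbed; the content is in the off-diagonal, namely pairs of \emph{distinct} primes lying in a common short interval $I_v$ whose reflected low parts fall in the fixed residue class $\beta_v\bmod m_1$.

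To estimate this off-diagonal I would invoke an upper-bound sieve for two linear forms (the Selberg sieve, see~\cite{IwKow}, or the Brun--Titchmarsh input~\eqref{eq:BrunTitchmarsh} of~\cite{MoVa}), bounding the number of prime pairs in $I_v$ at a given gap by $\ll g^{K}\mathfrak S/(N\log g)^2$ with $\mathfrak S$ the associated singular series, and using that $\mathfrak S$ averages to an absolute constant. Because $u\mapsto u^*$ is a bijection of $[0,g^{K})$, the reflection constraint admits exactly $\asymp g^{K}/m_1=g^{K}\gcd(g^{L},m)/m$ residues; this is the mechanism by which a factor of the form $\gcd(g^{N},m)$ enters, the precise power depending on how $L$ interacts with the $g$-part of $m$. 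Combining these, summing over $v$, and using $g^{L}\asymp m$ to control the diagonal, one arrives after taking square roots at a bound of the stated strength.

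The main obstacle is the reconciliation in the previous step: the reflection map is not arithmetic, so the constraint $u^*\equiv\beta_v\pmod{m_1}$ is \emph{not} a congruence on $u$ and, crucially, is not a condition on the gap $u_1-u_2$ on which the prime-pair sieve is organised. The Cauchy--Schwarz is used precisely to move the reflection off the individual primes and onto the pair, but one must then feed the (nonlinear) reflection congruence into the sieve while retaining both the full power $N^{-2}$ from counting prime pairs and the saving $m_1^{-1}$ from the reflected residue class. Making these two savings coexist is exactly what pins down the power of $N$ in the final exponent, and what determines whether the power of $\gcd(g^{N},m)$ comes out as stated or smaller; this is where I expect the real work to lie.
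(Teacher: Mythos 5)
Your starting identity $p_g^* = g^L u^* + v^*$ is exactly the paper's, but you have chosen the split in the opposite direction, and that choice creates the gap you yourself flag at the end. You place the congruence on the \emph{long} half: for each fixed high part $v$ you must count primes $u + g^K v$ in an interval of length $g^K \asymp g^N/m$ subject to $u^* \equiv \beta_v \pmod{m_1}$. As you admit, the reflection map has no arithmetic structure, so neither Brun--Titchmarsh~\eqref{eq:BrunTitchmarsh} nor a prime-pair sieve can detect this condition; Cauchy--Schwarz does not remove it either, because after squaring, each member of an off-diagonal pair still carries its own reflected-congruence constraint, which is not a function of the gap $u_1 - u_2$ that the pair sieve is organised around. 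So the decisive step of your argument is precisely the unresolved one. Worse, if that step could be carried out with the savings you request ($m_1^{-1}$ per prime together with $N^{-2}$ per pair), your own computation would deliver $R_g(N,m,a) \ll m + g^N\gcd(g^N,m)/(mN)$, which is essentially the \emph{conjectural} bound the paper poses as an open problem at the end of Section~\ref{sec:mirror} --- strong evidence that no known sieve input can fill this hole.

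The paper's proof sidesteps the obstruction by transposing the split so that primality and the congruence are never imposed simultaneously on the same object. It writes $p = g^{N-r}v + u$ with the \emph{short} half $u < g^{N-r} \le m$ receiving the congruence: for each fixed prefix $v$, the relation $g^{r}u_g^* + v_g^* \equiv a \pmod m$ confines $u_g^*$ to an arithmetic progression whose modulus is at least $g^{N-r}/\gcd(g^r,m)$, hence pins down $u$ --- prime or not, primality is simply discarded here --- to $O\left(\gcd(g^N,m)\right)$ values. Primality then enters only through bounding the number $t$ of distinct length-$r$ prefixes occurring among primes: close consecutive prefixes produce pairs of primes at small gaps, and the Halberstam--Richert bound~\eqref{eq:Q sieve} together with the average bound~\eqref{eq:av val} on the singular series gives $t \ll g^{(N+r)/2}/N$. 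Combining $R_g(N,m,a) \le t\,\gcd(g^N,m)$ with $g^r \asymp g^N/m$ yields the theorem. The missing idea in your proposal is exactly this decoupling: handle the congruence by pure counting on the half that fits inside one modulus length, and apply the sieve to a question about primes alone (how many prefixes they occupy), never to primes in reflected residue classes.
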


\begin{proof}  We choose some integer parameter $r\ge 1$ and consider the
integers
$$
1\le b_1 <  \ldots < b_t\le  g^r-1,
$$
formed by  the  top $r\le N$ $g$-ary digits of primes $p \in [g^{N-1}, g^N-1]$.
Clearly, for at least $(t-1)/2$
values of $i = 1, \ldots,t-1$, we have
$$
b_{i+1} - b_i \le H,
$$
where
$$
H = 2 \frac{g^{r}}{t-1}.
$$

Let $Q(N,h)$ be the number of primes $p \le g^N$ such that
$p+h$ is also prime.
Then we see that
\begin{equation}
\label{eq:t Q}
\frac{t-1}{2} \le \sum_{1 \le h \le H} Q(N,h).
\end{equation}

Using a very special case of the classical result of
Halberstam and  Richert~\cite[Theorem~3.12]{HR},
we see that
\begin{equation}
\label{eq:Q sieve}
Q(N,h)\ll \frac{g^N}{N^2} \prod_{\substack{\ell\mid h,~\ell\nmid g\\ \ell\ge 3~\text{prime}}}
\(\frac{\ell-1}{\ell-2}\).
\end{equation}
It is easy to show that
\begin{equation}
\label{eq:av val}
\sum_{1 \le h \le H} \prod_{\substack{\ell\mid h\\ \ell~\text{odd prime}}}
\(\frac{\ell-1}{\ell-2}\) \ll H.
\end{equation}
For example, using the elementary
inequality
$$
 \frac{z-1}{z-2} \le   \(\frac{z}{z-1}\)^2
 $$
 that holds for $z \ge 3$, we
  obtain
  $$
\prod_{\substack{\ell\mid h,~\ell\nmid g\\ \ell\ge 3~\text{prime}}}
\(\frac{\ell-1}{\ell-2}\) \le
\prod_{\substack{\ell\mid h,~\ell\nmid g\\ \ell\ge 3~\text{prime}}}
\(\frac{\ell}{\ell-1}\)^2  \le \( \frac{h}{\varphi(h)}\)^2
$$
and~\eqref{eq:av val} follows immediately from the general results of
Balakrishnan and   P{\'e}termann~\cite{BalPet}.

Thus, assuming that $t \ge 2$, substituting~\eqref{eq:Q sieve} and~\eqref{eq:av val}
in~\eqref{eq:t Q} we obtain
$$
t\ll H \frac{g^N}{N^2} + 1 \ll   \frac{g^{N+r}}{N^2 t} +1.
$$
Hence
\begin{equation}
\label{eq:t N r}
t\ll  \frac{g^{(N+r)/2}}{N}.
\end{equation}
Now, writing a prime $p$ as $p =g^{N-r}v + u$, with integers $u\in [0, g^{N-r}-1]$
and $v \in [g^{r-1}, g^r -1]$, we see that
$p_g^* = g^{r}u_g^* + v_g^*$.
Clearly, if we define $r$ by the condition
\begin{equation}
\label{eq:Cond r}
g^{N-r}\le  m < g^{N-r+1}
\end{equation}
then for each value of $v$, and thus of $v_g^*$, the congruence
$$
g^{r}u_g^* + v_g^* \equiv a  \pmod m
$$
defines $u_g^*$ and thus $u$
in at most $\gcd(m,g^r) \le \gcd(m,g^N)$ ways.
Hence
$$
R_g(N,m,a) \le t \gcd(m,g^N),
$$
which together~\eqref{eq:t N r} and~\eqref{eq:Cond r} implies the desired bound.
\end{proof}

We now give an arithmetic application of  Theorem~\ref{thm: RNma}. In particular,
we show that the sum of divisors function
$$
\sigma(k) = \sum_{d \mid k} d
$$
grows linearly on average over the sequence $p_g^*$ for primes $p$ in
the interval $p \in [g^{N-1}, g^N-1]$.

\begin{cor} \label{cor: SumDiv}
We have,
$$
\frac{g^{N}}{N} \ll \sum_{p \in [g^{N-1}, g^N-1]} \frac{\sigma(p_g^*)}{p_g^*} \ll   \frac{g^{N}}{N}.
$$
\end{cor}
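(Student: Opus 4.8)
The plan is to treat the two inequalities separately; the lower bound is essentially free, while the upper bound rests entirely on Theorem~\ref{thm: RNma}. For the lower bound I would simply use that $\sigma(k)/k \ge 1$ for every positive integer $k$ (and that $p_g^* \ge 1$ for every prime $p$, so each summand is well defined). This gives
$$
\sum_{p \in [g^{N-1}, g^N-1]} \frac{\sigma(p_g^*)}{p_g^*} \ge \sum_{p \in [g^{N-1}, g^N-1]} 1 = \pi(g^N-1) - \pi(g^{N-1}-1),
$$
and by the prime number theorem the right-hand side is $\sim \frac{g-1}{g}\cdot\frac{g^N}{N\log g} \gg g^N/N$, which is the left inequality.

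For the upper bound the first step is to expand $\sigma(k)/k$ as a divisor sum: $\sigma(k)/k = \sum_{e \mid k} 1/e$. Since every prime $p \in [g^{N-1}, g^N-1]$ satisfies $p_g^* < g^N$, each divisor $e$ of $p_g^*$ is $< g^N$, and interchanging the order of summation I would write
$$
\sum_{p \in [g^{N-1}, g^N-1]} \frac{\sigma(p_g^*)}{p_g^*}
= \sum_{e \ge 1} \frac{1}{e} \sum_{\substack{p \in [g^{N-1}, g^N-1] \\ e \mid p_g^*}} 1
= \sum_{e \ge 1} \frac{R_g(N,e,0)}{e},
$$
recognizing the inner count as $R_g(N,e,0)$, the number of primes in the interval with $p_g^* \equiv 0 \pmod e$ (the terms with $e \ge g^N$ vanish, since then $e \mid p_g^*$ is impossible for $1 \le p_g^* < g^N$).

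The final step is to insert the bound of Theorem~\ref{thm: RNma} with $m=e$, $a=0$ into each term, which yields
$$
\sum_{e \ge 1} \frac{R_g(N,e,0)}{e}
\ll \frac{g^N}{N}\sum_{e \ge 1}\frac{\gcd(g^N,e)}{e^{3/2}}.
$$
The one genuinely technical point --- and the step I would be most careful about --- is showing that the remaining sum is $O(1)$ uniformly in $N$. Writing $\gcd(g^N,e)=\sum_{\delta \mid \gcd(g^N,e)}\varphi(\delta)$ and interchanging summation, this sum becomes
$$
\sum_{e \ge 1}\frac{\gcd(g^N,e)}{e^{3/2}}
= \zeta(3/2)\sum_{\delta \mid g^N}\frac{\varphi(\delta)}{\delta^{3/2}}
\le \zeta(3/2)\sum_{\delta \mid g^N}\delta^{-1/2}
\le \zeta(3/2)\prod_{\ell \mid g}\(1-\ell^{-1/2}\)^{-1},
$$
where the product runs over primes $\ell \mid g$ and is a constant depending only on $g$. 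The feature making this converge uniformly in $N$ is precisely the $e^{1/2}$ in the denominator of Theorem~\ref{thm: RNma}: it leaves exponent $3/2 > 1$ overall and, more importantly, a positive exponent on each prime power dividing $g^N$, so the divisor sum telescopes into a finite Euler product independent of $N$. Combining the last two displays gives the upper bound $\ll g^N/N$, completing the proof.
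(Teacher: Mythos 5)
Your proposal is correct and follows essentially the same route as the paper: the lower bound via $\sigma(n)/n \ge 1$ and the prime number theorem, and the upper bound by expanding $\sigma(p_g^*)/p_g^*$ as a divisor sum, swapping the order of summation to get $\sum_{e} R_g(N,e,0)/e$, and inserting Theorem~\ref{thm: RNma}. The only (cosmetic) difference is in the final step showing $\sum_{e}\gcd(g^N,e)e^{-3/2}=O(1)$: you use the identity $\gcd(g^N,e)=\sum_{\delta\mid\gcd(g^N,e)}\varphi(\delta)$ and an Euler product over primes dividing $g$, while the paper groups the terms according to the size of the gcd and sums a geometric series; both computations are valid and give the same conclusion.
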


\begin{proof}
We set  $R_g(N,m) = R_g(N,m,0)$. We also write
\begin{equation*}
\begin{split}
\sum_{p \in [g^{N-1}, g^N-1]} \frac{\sigma(p_g^*)}{p_g^*}  &
=  \sum_{p \in [g^{N-1}, g^N-1]}  \sum_{ d \mid p_g^*}   \frac{d}{p_g^*}
=    \sum_{p \in [g^{N-1}, g^N-1]} \sum_{ d \mid p_g^*}   \frac{1}{d}\\
&=  \sum_{d < g^N}    \sum_{\substack{p \in [g^{N-1}, g^N-1] \\  p_g^* \equiv 0 \pmod d}} \frac{1}{d}
= \sum_{d < g^N}     \frac{1}{d} R_g(N,d).
 \end{split}
\end{equation*}
Using  Theorem~\ref{thm: RNma} we obtain
\begin{equation*}
\begin{split}
\sum_{p \in [g^{N-1}, g^N-1]} \frac{\sigma(p_g^*)}{p_g^*} & \ll  \frac{g^{N}}{N} \sum_{d < g^N}     \frac{\gcd(g^N,d)}{d^{3/2} }  \\
 & \ll   \frac{g^{N}}{N} \sum_{r=0}^N g^r \sum_{e < g^{N-r}}     \frac{1}{(g^re)^{3/2} }   \ll    \frac{g^{N}}{N}.
 \end{split}
\end{equation*}
The lower bound follows from the prime number theorem (see~\cite[Corollary~5.29]{IwKow}),
 as we trivially have  $\sigma(n)/n \ge 1$.
\end{proof}

We now denote by $\omega(k)$ the number of distinct prime divisors of an
integer $k \ge 1$.

\begin{cor} \label{cor: PrimeDiv}
We have,
$$
\omega\(\prod_{p \in [g^{N-1}, g^N-1]}p_g^*\) \gg  \frac{N^2}{(\log N)^2}.
$$
\end{cor}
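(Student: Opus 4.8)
The goal is to lower-bound the number of distinct prime divisors of the product of all the mirrored primes $p_g^*$ over $p \in [g^{N-1}, g^N-1]$. The plan is to use Theorem \ref{thm: RNma} as an upper bound on how many of these mirrored integers can be divisible by any single prime $\ell$, and then balance this against the total mass that the prime factorizations must carry. Let $\omega$ denote the number of distinct prime divisors of the product, and let $\ell_1 < \ell_2 < \cdots < \ell_\omega$ be these primes. By the prime number theorem, the number of primes $p$ in the interval is $\asymp g^N/N$, so there are $\asymp g^N/N$ integers $p_g^*$, each of which is at most $g^N$ and hence has at most $O(N)$ prime factors counted without multiplicity (since $2^{O(N)} \ge g^N$). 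Thus the total count of (integer, prime-divisor) incidences is $\ll (g^N/N)\cdot N = g^N$.

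The key step is to bound, for each fixed prime $\ell = \ell_j$, the number of $p_g^*$ divisible by $\ell$. This is exactly $R_g(N,\ell,0)$ in the notation of Corollary \ref{cor: SumDiv}, and Theorem \ref{thm: RNma} gives
\[
R_g(N,\ell,0) \ll \frac{g^N \gcd(g^N,\ell)}{N \ell^{1/2}} \ll \frac{g^N}{N \ell^{1/2}},
\]
where the last step uses that for a prime $\ell$ the factor $\gcd(g^N,\ell)$ is either $1$ or $\ell$; when it equals $\ell$ this is a prime dividing $g$, of which there are only $O(1)$, so these contribute negligibly and can be discarded. Summing this bound over the $\omega$ primes $\ell_1, \ldots, \ell_\omega$ gives a \emph{lower} bound on the total incidence count, because every mirror-prime divisible by $\ell_j$ is counted, so one must instead turn the argument around: the incidence count equals $\sum_{j=1}^\omega R_g(N,\ell_j,0)$ from the divisor side, and this is also $\ll g^N$ from the factor-count side above. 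To extract information about $\omega$, I would use that the smallest $\omega$ primes are $\ell_j \ge j \log j$ (more simply $\ell_j \gg j$), so the sum $\sum_{j=1}^\omega \ell_j^{-1/2} \gg \omega^{1/2}$ is minimized by taking the primes as small as possible; combined with the constraint coming from the total mass, this forces $\omega$ to be large.

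The cleanest route is to compare lower and upper bounds for the \textbf{same} quantity $S = \sum_{j=1}^{\omega} R_g(N,\ell_j,0)$. On one hand $S \gg g^N/N$, since each of the $\asymp g^N/N$ mirror-primes is divisible by at least one $\ell_j$ (every integer $\ge 2$ has a prime factor, and $p_g^* \ge g^{N-1}$), so $S \ge \#\{p_g^*\} \gg g^N/N$. On the other hand, by Theorem \ref{thm: RNma},
\[
S \ll \frac{g^N}{N}\sum_{j=1}^{\omega} \ell_j^{-1/2} \ll \frac{g^N}{N}\sum_{j=1}^{\omega} (j\log j)^{-1/2} \ll \frac{g^N}{N}\cdot \frac{\omega^{1/2}}{(\log \omega)^{1/2}},
\]
using $\ell_j \gg j \log j$ and estimating the resulting sum. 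Comparing the two bounds yields $\omega^{1/2}/(\log \omega)^{1/2} \gg 1$ trivially, which is too weak; the stated bound $\omega \gg N^2/(\log N)^2$ must therefore come from restricting to \emph{large} primes rather than summing over all of them. The \emph{main obstacle} is precisely this calibration: the dominant contribution to $S$ comes from small primes, whereas the lower bound $g^N/N$ on $S$ can be saturated by just the small primes, giving no leverage. The fix is to split off primes $\ell \le z$ for a threshold $z$ and observe that the mirror-primes divisible only by primes $\le z$ are $z$-smooth, hence few; the remaining mirror-primes each contribute a large prime factor, and Theorem \ref{thm: RNma} caps how often each large prime can appear. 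Choosing $z$ so that $g^N/\!\log z \asymp N \cdot (\text{cap per large prime})$ and optimizing should yield the threshold scale $\log z \asymp N$, i.e. $z \asymp g^N$, and deliver the exponent $N^2/(\log N)^2$. I expect the delicate point to be verifying that enough mirror-primes fail to be smooth, which should follow from the prime number theorem together with standard smooth-number density bounds.
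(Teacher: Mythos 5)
Your proposal, as written, is not a proof. The part you actually carry out --- comparing $S=\sum_{j}R_g(N,\ell_j,0)$ against the number of mirror-primes --- yields, as you yourself note, only the trivial bound $\omega\gg 1$, so everything rests on your proposed ``fix'', and there the calibration is wrong. You choose $\log z\asymp N$, i.e.\ $z$ of size $g^{cN}$; at that scale the assertion ``$z$-smooth, hence few'' is false: a positive proportion of \emph{all} integers up to $g^N$ are $g^{cN}$-smooth (the smooth-number density is a positive constant when $u=N\log g/\log z\asymp 1$), which vastly exceeds the $\asymp g^N/N$ mirror-primes, so the smoothness step rules out nothing. For the argument to have content you need the number of $z$-smooth integers below $g^N$ to be $o(g^N/N)$, which forces $u=N\log g/\log z\to\infty$ at a definite rate, i.e.\ $z$ far below $g^{cN}$ --- the opposite regime from the one you propose. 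Moreover this key counting step is left as ``I expect the delicate point to be...'', so the proof is incomplete even apart from the miscalibration.

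The irony is that your idea, correctly calibrated, works and gives \emph{more} than the stated corollary. Take $z=N^4$. The mirror-primes $p_g^*$ are $\gg g^N/N$ \emph{distinct} integers in $[g^{N-1},g^N)$ (reversal is injective on $N$-digit strings with nonzero end digits), while the number of $z$-smooth integers up to $g^N$ is, by any standard bound (e.g.\ Rankin's trick gives $\ll g^N e^{-u/2}$ with $u\asymp N/\log N$), smaller than any fixed power $g^N N^{-A}$; hence all but $o(g^N/N)$ mirror-primes have a prime factor $\ell>z$, necessarily coprime to $g$, and Theorem~\ref{thm: RNma} caps the number divisible by any such $\ell$ at $\ll g^N/(N\ell^{1/2})\le g^N/(Nz^{1/2})$. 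Dividing, $\omega\gg z^{1/2}=N^2$, already sharper than $N^2/(\log N)^2$, and pushing $z$ toward $g^{N/\log N}$ gives an exponential lower bound. The paper's own proof takes a different, smoothness-free route: it restricts to primes with leading digit $1$ (so $p_g^*\equiv 1 \pmod g$), bounds the full multiplicity of each prime $\ell$ in the product by $\nu_g(\ell,N)\le\sum_j R_g(N,\ell^j,0)\ll g^N/(N\ell^{1/2})$, lower-bounds the logarithm of the product by $\gg g^{N-1}$ via the prime number theorem, and notes that $w$ distinct primes can carry at most $\frac{g^N}{N}\sum_{\ell}\frac{\log\ell}{\ell^{1/2}}\ll\frac{g^N}{N}(w\log w)^{1/2}$ of logarithmic mass (worst case: the $w$ smallest primes), whence $w\log w\gg N^2$. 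So: repair your threshold and supply the smooth-number estimate and you have a genuine, in fact stronger, alternative; as submitted, the argument has a gap precisely at its load-bearing step.
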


\begin{proof} Let us consider only the primes with the first digit equal to 1.
That is primes $p  \in [g^{N-1}, 2g^{N-1}-1]$.
It is enough to consider
$$
W_g(N) = \prod_{p  \in [g^{N-1}, 2g^{N-1}-1]}p_g^*.
$$
and estimate
$$
w_g(N) = \omega\(W_g(N)\).
$$

For a prime $\ell$ we denote by $\nu_g(\ell, N)$ the largest  power $\ell$ that divides
$W_g(N)$.
We have
$$
\nu_g(\ell, N) \le  \sum_{j=1}^\infty R_g(N,\ell^j,0).
$$

Clearly $R_g(N,\ell^j,0) = 0$ if $j\ge  N \log g/\log \ell$.
We also note that for $p  \in [g^{N-1}, 2g^{N-1}-1]$ we have $p_g^* \equiv 1 \pmod g$,
hence only primes $\ell$ with $\gcd(\ell,g)=1$ have to be considered.

Hence, using  Theorem~\ref{thm: RNma} we obtain
\begin{equation}
\label{eq:ell-adic order}
\nu_g(\ell, N) \ll \frac{g^N}{N\ell^{1/2}} .
\end{equation}
By the prime number theorem,
\begin{equation}
\label{eq:W low}
\log W_g(N)  \ge\log \( \(g^{N-1}\)^{(1+o(1))g^{N-1}/(N \log g)} \) =(1+o(1))g^{N-1}.
\end{equation}
On the other hand, using~\eqref{eq:ell-adic order} we obtain
\begin{equation}
\label{eq:W up}
\log W_g(N) = \sum_{\ell \mid W_g(N)} \log \(\ell^{\nu_g(\ell, N)}\)
 \le \frac{g^N}{N} \sum_{\ell \mid W_g(N)}  \frac{\log \ell }{\ell^{1/2}},
\end{equation}
where the sum is  over all primes $\ell \mid  W_g(N)$.
It is easy to see that by the  prime number theorem and partial summation,
for any real $L \ge 3$, we have
$$
 \sum_{\ell  \le L}  \frac{\log \ell }{\ell^{1/2}} \ll L^{1/2}.
$$
Hence
$$
 \sum_{\ell \mid W_g(N)}  \frac{\log \ell }{\ell^{1/2}} \ll \(w_g(N) \log w_g(N)\)^{1/2}
 $$
 and combining this with~\eqref{eq:W low} and~\eqref{eq:W up} we obtain
 $$
  \(w_g(N) \log w_g(N)\)^{1/2}\gg N
$$
 and the result follows.
\end{proof}

One can also derive from Theorem~\ref{thm: RNma}  that
for all but $o(g^N/N)$ primes $p \in [g^{N-1}, g^N-1]$,
the cube-full part of $p_g^*$ is at most
$\psi(N)$ for any function $\psi(N)$ with
$\psi(N) \to \infty$ as $N \to \infty$.
The bounds~\eqref{eq:R triv} give only $\psi(N) N^{1/2}$.

It is also possible to derive other arithmetic applications of
Theorem~\ref{thm: RNma}, for example to show that
$$
 g^N/N\gg \sum_{p \in [g^{N-1}, g^N-1]} \varphi(p_g^*) \gg  g^N/N.
$$

Obtaining better bounds on $R_g(N,m,a)$, in particular, improving those in~\eqref{eq:R triv} and  
Theorem~\ref{thm: RNma}, is also an interesting problem as well, with many potential applications. For example, one can
conjecture that if  $\gcd(g,m)=1$ then we have 
$$
R_g(N,m,a)   \ll  \frac{g^N}{mN}
$$
in a wide range of parameters $m$ and $N$. It is also possible that there is an
asymptotic formula for $R_g(N,m,a)$, but it has to take into account some
local conditions of the same type which are used for
$$(m,g) = (2,2),\, (3,2)
$$
in  Section~\ref{sec:num}.


\section{Heuristics  and Numerical Tests}
\label{sec:num}

Theorem~\ref{thm:PrimeSeq Small g} motivates us to define
$$
\rho_g = \limsup_{N\to \infty} P_g(N)^{1/N}
$$
thus $\rho_g \le \gamma_g$.
We believe that $P_g(N)$ grows exponentially and thus $\rho_{g}>1$  but the
growth is rather slow and thus $\rho_{g}$ is much smaller than $g$. Figure~\ref{fig:png} shows
the growth of $P_g(N)$ for $N\le 50$ and the following estimates seems to be more accurate:
$$
  \rho_2 \approx \rho_3\approx 1.045,\quad \rho_5\approx 1.05,\quad \rho_{10}\approx 2.25.
$$
\begin{figure}[H]
  \centering
  \includegraphics[scale=0.6]{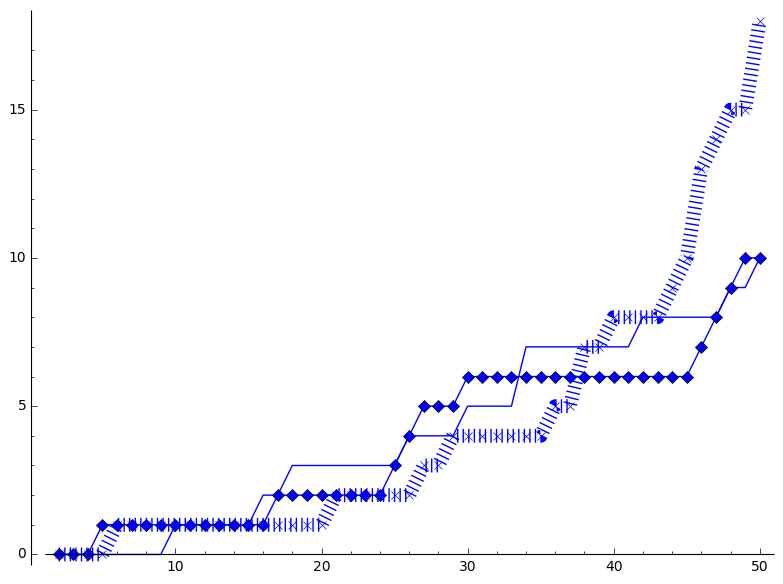}
  \caption{Growth of $P_g(N)$ for $g=2$, $3$ and $5$ in solid, diamonds and dashed lines, respectively.}
  \label{fig:png}
\end{figure}
On the other hand, it seems that the arithmetic structure of $g$ has to be reflected
in any  good approximation
for $\rho_g$. For example, our computation seems to point that  $\rho_{11} \approx 1.6$
and  $\rho_{12} \approx 3.6$.

It is also natural to forbid zero digits and denote by $P_g^*(N)$ the number of prime generating sequences
$\cD = \{d_i\}_{i=0}^{N-1} \in \{1, \ldots, g-1\}^{N-1}$  of
length $N$ that consist only of non-zero digits (in particular the definition is only interesting
for $g \ge 3$). Heuristically, from each value $u_{\cD,g}(N-1)$ contributing to $P_g^*(N-1)$ we seek
through $g-1$ values
for $d_{N-1}  \in \{1, \ldots, g-1\}$ such that  $u_{\cD,g}(N) = d_{N-1}  g^{N-1} + u_{\cD,g}(N-1)$
is prime.
For $N \ge 2$, a naive approximation to the number of  primes
$$
p < g^N \mand p  \equiv u_{\cD,g}(N-1) \pmod {g^{N-1}}
$$
is
$$
\frac{g^N} {\varphi\(g^{N-1}\) \log (g^N)}  =
\frac{g^2} {N \varphi\(g\) \log g}. 
$$
However for $d_{N-1}$  only $g-1$ out $g$ values are admissible.
Hence, we are led to the approximate recursive relation
$$
P_g^*(N) \approx     \frac{g(g-1)} {N  \varphi(g)  \log g} P_g^*(N-1),
$$
with the inital value $P_g^*(1) = \pi(g-1)$,
which in turn leads us to
the approximation
$$
P_g^*(N) \approx  A_g    \frac{\(g(g-1)\)^{N}} {N!  \(\varphi(g)  \log g\)^N},
$$
where the coefficient $A_g$ depends on the actual values of $P_g^*(N)$ for small values
of $N$.
Using the Stirling formula in the very crude form $N! \approx (N/e)^N$,  we rewrite this as
\begin{equation}
  \label{eq:approximation}
P_g^*(N) \approx    A_g   \( \frac{eg(g-1)} {N \varphi(g)  \log g}\)^N
\end{equation}
for some factor $A_g$ depending only on $g$.
We remark that it is hard to get any explicit formula for $A_g$,
which depends on the initial behaviour of the sequence $P_g^*(N)$.
In particular we see that~\eqref{eq:approximation}, ignoring the presence of the factor $A_g$,
suggests that $P^*_g(N)<1$ (and thus $P^*_g(N) = 0$)
for $N>N_g$ where
\begin{equation}
  \label{eq:Ng}
N_g =\fl{\frac{eg(g-1)} {\varphi(g)  \log g}}.
\end{equation}

Quite naturally, the approximation~\eqref{eq:approximation} is better when $N$ is bigger.
Figure~\ref{fig:PnM} shows the values of the  relative error of the ratio between
$P^*_{g}(N)$ and the  term of the approximation~\eqref{eq:approximation} that varies with $N$, that is,
$$
\alpha_g(N) =  P^*_{g}(N)^{1/N}   \frac{N \varphi(g)  \log g}{eg(g-1)}.
$$
We expect that $\alpha_g(N)$  approximates $A_g^{1/N}$
from below (as the density of primes in the initial intervals $[1,x]$  is  a little higher than $1/\log x$, especially for small values of $x$).   So if the constant $A_g$ 
in~\eqref{eq:approximation} is not too large, it is natural to expect
 that  $\alpha_g(N)$ is close to $1$ in the middle range of $N$ (when the values of  $P^*_{g}(N)$
 are large).  Figure~\ref{fig:PnM} demonstrates the validity of this heuristic prediction.
\begin{figure}[H]
  \centering
  \includegraphics[scale=0.65]{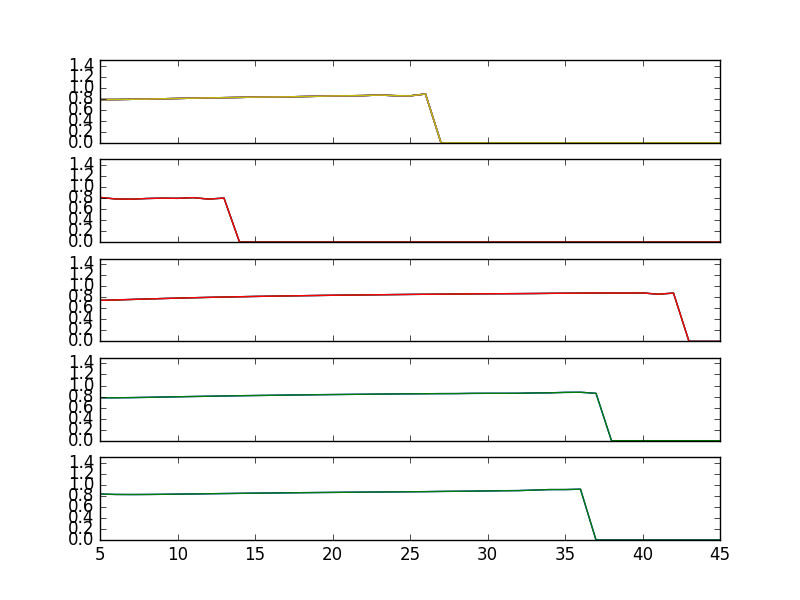}
  \caption{Values of $\alpha_g(N)$ for $g=16,17, 18,20,22$}
  \label{fig:PnM}
\end{figure}
We note that
$$
N_{16} = 29, \qquad N_{17} = 16, \qquad  N_{18} =  47, \qquad N_{20} = 43, \qquad N_{22}= 40,
$$
while Figure~\ref{fig:PnM} shows that the smallest values of $N$ for which  
$\alpha_g(N)=0$ for  $g=16$,  $17$, $18$, $20$ and 
$22$ are $27$, $14$, $43$, $38$ and $37$, respectively.
We also see that in the middle range  of $N$ the function $\alpha_g(N)$
 behaves as almost a constant function, until it suddenly drops to zero.
   So if the constant $A_g$ in~\eqref{eq:approximation} is not too large, it is natural to expect
 that  $\alpha_g(N)$ is close to $1$ in the middle range of $N$ (when the values of  $P^*_{g}(N)$
 are large).
 Certainly, using the
precise values of $N!$ instead of the Stirling formula can also produce  to a more precise
numerical prediction of $P^*_{g}(N)$. For example, we always have $N! > (N/e)^N$ which leads to
slight overestimation of $N_g$  in~\eqref{eq:Ng}.

Another consequence of the approximation~\eqref{eq:approximation} is that one expects $P_g^*(N) = 0$ for a
sufficiently large $N$ (by the Stirling formula,
of size about $e (g-1)/\log g$). This has been tested for $g<40$ using
the computer resources provided by the Santander Supercomputing and,
in all the cases $P_g^*(N) = 0$ for sufficiently large $N$.

We  now make some comments  on the expected growth of $M_g(N)$.
We concentrate on the case of $g=2$.
Clearly a mirror of a prime $p \in [2^{N-1}, 2^N-1]$ is always odd,
which we write as
\begin{equation}
  \label{eq:Cong 2}
 p_2^*  \not \equiv 0 \pmod 2.
\end{equation}
Furthermore considering the digit expansion of  $s \in [2^{N-1}, 2^N-1]$, we derive
\begin{align*}
s  = \sum_{i=0}^{N-1} d_i 2^{i} &\equiv  \sum_{i=0}^{N-1} d_i  (-1)^{i} \\
&  \equiv  (-1)^{N-1} \sum_{i=0}^{N-1} d_i  (-1)^{N-i-1}  \equiv  \pm s_2^* \pmod 3
\end{align*}
we see that for any  prime $p \in [2^{N-1}, 2^N-1]$ we also have
\begin{equation}
  \label{eq:Cong 3}
 p_2^*  \not \equiv 0 \pmod 3.
\end{equation}
The local conditions~\eqref{eq:Cong 2} and~\eqref{eq:Cong 3} (there seems to be no other local conditions)
 coupled with the standard
heuristic suggest that
\begin{equation}
\label{eq:M2N}
\begin{split}
M_2(N) & \approx \(1 -\frac{1}{2}\)^{-1} \cdot  \(1 -\frac{1}{3}\)^{-1} \\
& \qquad \qquad \qquad \cdot \(\pi(2^N) - \pi(2^{N-1})\) \cdot  \frac{\pi(2^N) - \pi(2^{N-1})}{2^{N-1}}\\
& =  6\cdot \frac{\(\pi(2^N) - \pi(2^{N-1})\)^2}{2^{N}}.
 \end{split}
\end{equation}
We have done some computer experiments for $g=2$ and we have plotted the results
of $6\cdot \(\pi(2^N) - \pi(2^{N-1})\)^2/\(2^{N} M_2(N)\)$ in
Figure~\ref{fig:mirror}, which in general seems to be consistent with~\eqref{eq:M2N}. 
However, Figure~\ref{fig:mirror} also indicates that 
there is some small positive bias. 
\begin{figure}[H]
  \centering
  \includegraphics[scale=0.6]{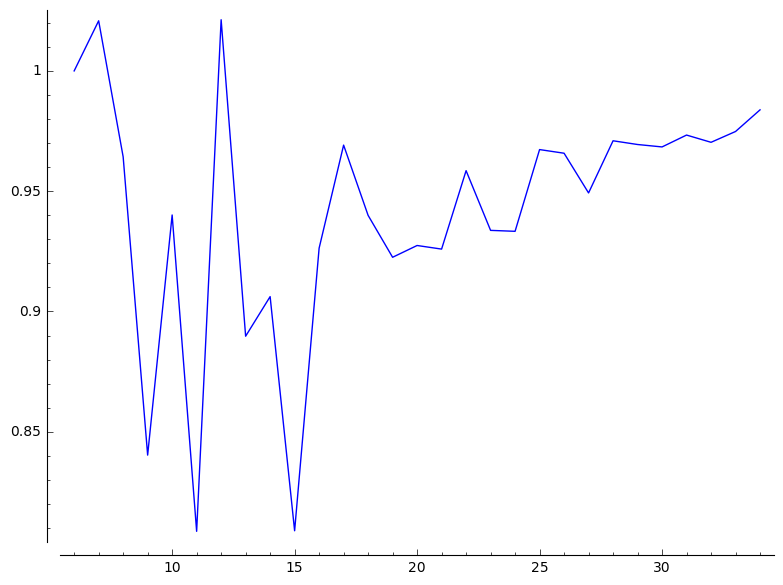}
  \caption{Growth of $6\cdot (\pi(2^N) - \pi(2^{N-1}))^2/(2^{N} M_2(N))$}
  \label{fig:mirror}
\end{figure}

\section*{Acknowledgements}
%

The authors are very grateful to Pieter Moree for introducing him the question
about the mirror primes, and also to Christian Mauduit and Jo{\"e}l Rivat for discussions
of possible approaches to estimating $M_g(N)$.
The authors thankfully acknowledge the computer resources, technical expertise and assistance provided by the the Santander Supercomputing services
at the University of Cantabria.

During the preparation of this paper the first author was partially supported
by project MTM2014-55421-P from the Ministerio de Economia y Competitividad and
the second author was partially supported
by
Australian Research Council Grant~DP140100118.

\end{document}